\documentclass[11pt,leqno]{amsart}
\usepackage{amsmath,amssymb,amsthm}
\usepackage{hyperref}

\usepackage{tikz-cd}

\usepackage{hyperref}



\usepackage{bbm}

\newcommand{\R}{\mathbb{R}}

\DeclareMathOperator{\diam}{diam\,}
\DeclareMathOperator{\co}{co}

\newcommand{\lspan}{\operatorname{span}}

\renewcommand{\geq}{\geqslant}
\renewcommand{\leq}{\leqslant}

\newcommand{\norm}[1]{\left\Vert#1\right\Vert}

\newcommand{\cco}{\overline{\operatorname{co}}}

\newtheorem{theorem}{Theorem}[section]
\newtheorem{lemma}[theorem]{Lemma}

\newtheorem{proposition}[theorem]{Proposition}

\theoremstyle{definition}
\newtheorem{definition}[theorem]{Definition}

\theoremstyle{remark}
\newtheorem{remark}[theorem]{Remark}

\numberwithin{equation}{section}
\newcommand{\abs}[1]{\left\lvert#1\right\rvert}

\def\fnote#1{\footnote}

\def\natu{{\mathbb N}}

\def\ignora#1{}
\def\n3#1{\left\vert  \! \left\vert \! \left\vert \, #1 \, \right\vert \!
  \right\vert \! \right\vert }


\renewcommand{\leq}{\le}



\usepackage{accents}
\usepackage[most]{tcolorbox}
\usepackage{comment}

\let\emptyset\varnothing

\newcommand{\N}{\mathbb{N}}

\newcommand{\eps}{\varepsilon}
\newcommand{\conv}{\text{co}}

\begin{document}

\author{ Gin\'es L\'opez-P\'erez }\address{Universidad de Granada, Facultad de Ciencias. Departamento de An\'{a}lisis Matem\'{a}tico, 18071-Granada
(Spain)} \email{ glopezp@ugr.es}

\author{ Esteban Martínez Vañó }\address{Universidad de Granada, Facultad de Ciencias. Departamento de An\'{a}lisis Matem\'{a}tico, 18071-Granada
(Spain)} \email{ emv@ugr.es}

\author{ Abraham Rueda Zoca }\address{Universidad de Granada, Facultad de Ciencias. Departamento de An\'{a}lisis Matem\'{a}tico, 18071-Granada
(Spain)} \email{ abrahamrueda@ugr.es}
\urladdr{\url{https://arzenglish.wordpress.com}}

\subjclass[2020]{46B20, 46B22}

\keywords{Radius; diameter; slice; weakly open subset}

\title{Big weak open radius versus big slice diameter}

\begin{abstract}
We show that there exists a Banach space in which every non-empty weakly open subset of its unit ball has radius one, the maximum possible value, but the infimum of the diameter of its slices is exactly one, so extremely far from its maximum. In fact, we show that there is a wide class of non-isomorphic Banach spaces satisfying this extreme difference between the behaviour of the radius and the diameter of non-empty weakly open subsets.
\end{abstract}

\maketitle

\markboth{G. L\'OPEZ-P\'EREZ, E. MART\'INEZ VAÑ\'O, A. RUEDA ZOCA}{BIG WEAK OPEN RADIUS VERSUS BIG SLICE DIAMETER}

\section{Introduction}

The study of the geometry behind slices, weakly open sets and convex combinations of slices has been central in several studies in Functional Analysis. To mention an important result in that line, the existence of slices (respectively non-empty weakly open subsets, convex combinations of slices) of arbitrarily small diameter or arbitrarily small radius in every closed, convex and bounded subset of the space yields a characterisation of the Radon-Nikodym property (RNP) (respectively of the convex point of continuity property (CPCP), strong regularity (SR)). 

In the opposite extreme, we can find that the study of big slices, weakly open subsets and convex combinations of slices has been carried out through the diameter two properties \cite{ahntt16,almt21,aln13,blr15eje1,blr15eje2} and the r-BSP \cite{hllnr20,ivakhno06,rz23}.

Recall that a Banach space $X$ has the \textit{slice-diameter two property} (slice-D2P), respectively the \textit{diameter two property} (D2P), the \textit{strong diameter two property} (SD2P), if every slice of the unit ball, respectively every non-empty relatively weakly open subset, every convex combination of slices of the unit ball, has diameter exactly 2. Moreover, $X$ has the \textit{r-big slice property} (r-BSP) if every slice $S$ of the unit ball $B_X$ has radius exactly 1, in other words, if given any $x\in X$ and any $\varepsilon>0$ there exists a $y\in S$ such that $\Vert x-y\Vert \geq1-\varepsilon$. 

Concerning the diameter two properties, even though its study has been implicit in connection with other geometric properties of Banach spaces since the eighties (for instance in the study of rough norms \cite{jozi78}, octahedral norms \cite{deville88} and the Daugavet property \cite{kssw01}), the starting point of their development as an independent research topic can be traced back to \cite{nw01}. It is immediate that the D2P implies the slice-D2P, whereas the reverse implication fails in an extreme way \cite[Theorem 2.4]{blr15eje1} (another extreme counterexample was latter given in \cite[p. 395]{almt21}). In a similar way, it is known that the SD2P implies the D2P (c.f. e.g. \cite[p. 440]{aln13}), whereas once again the converse implication fails in an extreme way \cite[Theorem 2.5]{blr15eje2} (once again, another extreme counterexample was latter provided in \cite[Theorem 2.12]{ahntt16}).

Concerning the r-BSP, this property was introduced in \cite{ivakhno06} together with the slice-D2P (under the name of \textit{d-big slice property}), making a systematic study of both properties. It is proved there that the slice-D2P implies the r-BSP, leaving the converse as an open question in \cite[p. 96]{ivakhno06}. A negative answer was provided in \cite[Theorem 3.7]{hllnr20}, where it was proved that the space $X=(JT_\infty)_*$ has the r-BSP but the infimum of the diameter of slices of the unit ball is $\sqrt{2}$. Such counterexample was put further in \cite[Theorem 1.1]{rz23}, where there is an example of a Banach space $X$ with the r-BSP, the CPCP and such that the infimum of the diameter of slices of the unit ball equals $1$. This example shows that the r-BSP and the slice-D2P are as far as they can be. 

The above difference between the r-BSP and the slice-D2P has another interesting implication from an isomorphic point of view. It has been conjectured that every Banach space failing the RNP admits an equivalent renorming with the slice-D2P (c.f. e.g. \cite[p. 553]{bl06}). The above difference between the r-BSP and the slice-D2P implies, however, that the following conjecture does make more sense: every Banach space $X$ failing the RNP admits an equivalent renorming with the r-BSP. Observe that it was proved in \cite[Theorem 2]{ivakhno06} that every Banach space failing the RNP admits, for every $\varepsilon>0$, an equivalent renorming such that every slice of the new unit ball has radius at least $1-\varepsilon$. 

In this paper we consider the question whether the difference between the r-BSP and the slice-D2P could be translated from slices to weakly open subsets. Namely, we introduce in Definition~\ref{def:r-BWOP} the \textit{r-big weak open property} (r-BWOP), which is clearly implied by the D2P. We face the question whether the r-BWOP actually implies the D2P. Let us point out that we will not consider here the similar version for convex combinations of slices since we produce nothing new but the SD2P (see Remark~\ref{remark:versionccslices}).

Coming back to our main question, the main result of the paper is the following.

\begin{theorem}\label{maintheorem}
There exists a Banach space $X$ with the \emph{r-BWOP} and such that
$$\inf\left\{\diam(S): S\subseteq B_X\mbox{ \emph{is a slice}} \right\}=1.$$
\end{theorem}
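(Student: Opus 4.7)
The strategy is to construct an explicit Banach space $X$ with a finely engineered structure that decouples the weak and norm topologies enough to make two apparently conflicting conditions hold simultaneously: every non-empty relatively weakly open subset of $B_X$ has Chebyshev radius one, while there are slices of $B_X$ of diameter arbitrarily close to one. These two statements are indeed compatible and sharp, because r-BWOP implies that every slice (being weakly open) has Chebyshev radius one and hence diameter at least one. The construction should extend the r-BSP counterexamples of \cite{hllnr20,rz23}, but asks for substantially more: the asymptotic mechanism producing radius one must operate near \emph{every} point of $B_X$, not only near a slice peak.

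For the r-BWOP I would equip $X$ with an \emph{asymptotic $\ell_\infty$-structure}, meaning that at every $u \in B_X$ there is a weakly null net of normalised vectors $(v_\alpha)$ such that
\[
\|\lambda u + (1-\lambda) v_\alpha\| \longrightarrow \max(\lambda, 1-\lambda) \quad \text{and}\quad \|x + (1-\lambda) v_\alpha\| \longrightarrow \max(\|x\|, 1-\lambda)
\]
for every $x \in X$ and every $\lambda \in [0,1]$. Given a non-empty relatively weakly open $U \subseteq B_X$, pick $u \in U$ and then $\lambda$ close to $1$ so that a weak neighbourhood of $\lambda u$ is contained in $U$. The net $y_\alpha := \lambda u + (1-\lambda) v_\alpha$ converges weakly to $\lambda u$, hence eventually lies in $U$, while $\|y_\alpha\| \to \max(\lambda, 1-\lambda) \leq 1$ keeps it (after a harmless shrinking) inside $B_X$. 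For any target $x \in X$ and $\varepsilon > 0$, the asymptotic estimate yields
\[
\|x - y_\alpha\| \longrightarrow \max\!\bigl(\|x - \lambda u\|,\ 1-\lambda\bigr),
\]
and a suitable choice of $\lambda$ --- taking $1 - \lambda$ close to $1$ when $x$ is close to $u$, or exploiting $\|x-\lambda u\|$ directly when $x$ is far from $u$ --- produces an element of $U$ at distance at least $1 - \varepsilon$ from $x$.

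The small-slice half requires constructing explicit functionals $x^*_n \in S_{X^*}$ whose slices cluster tightly around a single norm-one distinguished element so that their diameters tend to $1$; here I would lean directly on the technique of \cite[Theorem~1.1]{rz23}, after verifying that the particular $X$ in our construction inherits such a thin-slice feature. The main obstacle is reconciling both requirements within one space: every thin slice is itself relatively weakly open, so by r-BWOP its Chebyshev radius must be exactly $1$; this forces the peculiar geometry in which the elements of the slice are pairwise at distance close to $1$ yet collectively cannot be enclosed in any ball of radius smaller than $1$. Engineering this typically demands a layered or tree-like architecture on $X$ in which the peaking directions of the thin slices and the weakly null escape directions responsible for the r-BWOP inhabit essentially disjoint parts of $X$, and interact only through an asymptotic $\ell_\infty$ formula that prevents either mechanism from contaminating the other.
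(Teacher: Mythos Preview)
Your proposal is a research sketch, not a proof: no concrete space is ever defined, and both halves of the argument rest on the existence of a space with a list of desiderata whose simultaneous satisfiability is precisely the content of the theorem.

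More critically, the r-BWOP argument contains an internal contradiction. You first fix $\lambda$ close to $1$ so that a weak neighbourhood of $\lambda u$ lies in $U$; this is the only regime in which $\lambda u$ is guaranteed to belong to such a neighbourhood, since $U$ is an arbitrary relatively weakly open set around $u$. Under your asymptotic formula one has $\|x - y_\alpha\| \to \max\bigl(\|x-\lambda u\|,\,1-\lambda\bigr)$, and when $x = u$ (or $x$ close to $u$) both terms in the maximum are of order $1-\lambda$, hence small. Your proposed remedy---``taking $1-\lambda$ close to $1$ when $x$ is close to $u$''---means taking $\lambda$ close to $0$, so $\lambda u$ is near the origin, and there is no reason for any weak neighbourhood of $0$ to be contained in $U$. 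The two constraints on $\lambda$ are incompatible, and the mechanism fails exactly at the crucial case $x \approx u$. An asymptotic $\ell_\infty$ formula of the type you postulate is simply not strong enough to force r-BWOP.

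The paper proceeds by a completely different and fully explicit route. It defines an equivalent norm $|\cdot|_\varepsilon$ on $c_0 \oplus_\infty \mathbb{R}$ whose unit ball is built from the Argyros--Odell--Rosenthal set $K_0 \subset c_0$ (Lemma~\ref{renormacion}), and shows directly that every nonempty relatively weakly open subset $W$ of that ball contains elements whose $c_0$-component has a coordinate equal to $1$ at arbitrarily high indices. Any such element is automatically at $\|\cdot\|_\infty$-distance, hence at $|\cdot|_\varepsilon$-distance, at least $1$ from any fixed finitely supported target; no $\lambda$-balancing is needed, because the escaping direction is added with full weight and the specific shape of $K_0$ and of $B_\varepsilon$ keeps the perturbed point inside the ball. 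The thin slices come from $\diam(K_0)=1$. This yields r-BWOP together with slice-diameter infimum at most $\tfrac{1}{1-\varepsilon}$; the passage to infimum exactly $1$ is then obtained by taking an $\ell_1$-sum of such renormings with $\varepsilon \to 0$, using the stability of r-BWOP under $\ell_p$-sums (Proposition~\ref{psuma}). None of these concrete ingredients is present in your outline.
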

An inspection in the proof reveals that not only we prove the existence of such space $X$ but also we find a large class of non-isomorphic examples (see Remark~\ref{remark:conseqmaintheor}).

After fixing a bit of notation, we will split the content of the paper in two sections. In Section~\ref{section:psumas} we will introduce the formal definition of the r-BWOP together with some observations about it. Then we will describe the behaviour of the r-BWOP by classical $\ell_p$-sums in Proposition~\ref{infsuma} (case $p=\infty$) and in Proposition~\ref{psuma} (case $1\leq p<\infty$). Apart from completeness, all the above mentioned stability results will be used in the proof of Theorem~\ref{maintheorem}, which will be developed in Section~\ref{section:demoprincipal}. The (involved) proof of such result will be presented in several steps: first of all, we prove in Lemma~\ref{renormacion} that $c_0\oplus_\infty\mathbb R$ has a renorming with the r-BWOP and such that the infimum of the diameter of slices of the unit ball is at most $1+\delta$. The proof of this result will make use of ideas from \cite[Theorem 2.5]{blr15eje2}. We will then obtain a generalisation in Theorem~\ref{c0compl}, where we extend Lemma~\ref{renormacion} from $c_0\oplus_\infty\mathbb R$ to any Banach space $X$ containing a complemented copy of $c_0$. Finally we present the proof of Theorem~\ref{maintheorem} and conclude with some remarks and open questions.

\textbf{Terminology:} We only consider real Banach spaces. The closed unit ball of a Banach space $X$ is denoted
by $B_X$ and its unit sphere by $S_X$. The dual space of $X$ is denoted by $X^\ast$ and the bidual by $X^{\ast\ast}$.

By a \textit{slice} of $B_X$ we mean a set of the form
\begin{equation*}
S(B_X, x^*,\alpha) :=
\{
x \in B_X : x^*(x) > 1 - \alpha
\},
\end{equation*}
where $x^* \in S_{X^*}$ and $\alpha > 0$. 

We write ${\rm co}(D)$ (resp., $\overline{{\rm co}}(D)$) to denote the convex hull (resp., closed convex hull) of~$D$. Moreover, we will denote by
$$U(x_0;f_1,\ldots, f_n;\varepsilon):= \bigcap_{i=1}^n\{x\in X: \vert f_i(x-x_0)\vert<\varepsilon\},$$
where $f_1,\ldots, f_n\in X^*, x_0\in X$ and $\varepsilon>0$.

\section{\texorpdfstring{\textnormal{r}}{r}-big weak open property. Stability properties}\label{section:psumas}

We begin the section with the definition of the radius in a Banach space:

\begin{definition}
Let $X$ be a Banach space and $C \subset X$ be a bounded subset of $X$. We define the \textit{radius} of $C$ as
$$r(C) = \inf \left\{ t>0: \exists x \in X \text{ such that } C \subset x + tB_X \right\}.$$
\end{definition}

We will use the following easy characterization along the text:

\begin{lemma}\label{charradio}
Let $X$ be  Banach space, $C \subset X$ a bounded subset of $X$ and $r \geq 0$. Then, $r(C) \geq r$ if, and only if, for any $x \in X$ and $\eps > 0$, there exists some $c \in C$ with $\norm{x-c} \geq r - \eps$. 
\end{lemma}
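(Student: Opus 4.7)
The plan is to reduce the definition of $r(C)$ to the formula
\[
r(C) \;=\; \inf_{x\in X}\sup_{c\in C}\norm{c-x},
\]
and then the equivalence becomes a routine unpacking of "inf" and "sup".

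First I would verify the displayed formula. The condition $C\subset x+tB_X$ is equivalent to $\norm{c-x}\le t$ for every $c\in C$, which is in turn equivalent to $\sup_{c\in C}\norm{c-x}\le t$. Since $C$ is bounded, $R(x):=\sup_{c\in C}\norm{c-x}$ is finite, and the set of admissible $t>0$ for a fixed $x$ is $[R(x),\infty)$ when $R(x)>0$ (and $(0,\infty)$ when $R(x)=0$, whose infimum is still $R(x)=0$). Taking the infimum over $x\in X$ then gives $r(C)=\inf_{x\in X}R(x)$.

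For the forward direction, assume $r(C)\ge r$ and fix $x\in X$ and $\eps>0$. Since $r(C)$ is the infimum of the quantities $R(y)$, we in particular have $R(x)\ge r(C)\ge r>r-\eps$. By the definition of the supremum, there must exist some $c\in C$ with $\norm{c-x}>r-\eps$, which certainly satisfies $\norm{c-x}\ge r-\eps$. For the converse, assume the pointwise condition. Fix $x\in X$; for every $\eps>0$ we have some $c\in C$ with $\norm{x-c}\ge r-\eps$, whence $R(x)\ge r-\eps$. Letting $\eps\to 0^+$ yields $R(x)\ge r$, and taking the infimum over $x\in X$ gives $r(C)\ge r$, as desired.

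There is essentially no obstacle here; the only minor point to be careful about is the degenerate case $R(x)=0$ (i.e.\ $C\subset\{x\}$) when translating $C\subset x+tB_X$ into the supremum formula, and the mild asymmetry between "$>$" and "$\ge$" inside the definition of supremum, both of which are handled by inserting an arbitrary $\eps>0$ in the statement.
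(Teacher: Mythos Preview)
Your proof is correct. The paper itself does not supply a proof of this lemma at all; it simply states it as an ``easy characterization'' and moves on, so your argument is exactly the routine verification the authors chose to omit. The reduction to $r(C)=\inf_{x\in X}\sup_{c\in C}\norm{c-x}$ followed by the inf--sup unpacking is the natural way to see it, and your handling of the $>$ versus $\ge$ issue via the $\eps$ slack is clean.
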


As we mentioned in the introduction, we are interested in the introduction of the following generalization of the r-BSP:

\begin{definition}\label{def:r-BWOP}
Let $X$ be a Banach space. We say that $X$ has the \textit{r-big weak open property} (r-BWOP) if every non-empty weakly open subset of its unit ball has radius one.
\end{definition}

Observe that given any bounded subset $C \subset X$ it is obvious that
$$r(C) \leq \diam(C) \leq 2r(C),$$
so any space with the slice-D2P (D2P) will clearly have the r-BSP (r-BWOP), and in the opposite side, if there exist slices (weakly open subsets) of the unit ball with arbitrarily small diameter, then there are also slices (weakly open subsets) with arbitrarily small radius. This allows us to differentiate the r-BSP from the r-BWOP, as there exist Banach spaces with the slice-D2P whose unit ball contains non-empty relatively weakly open subsets of arbitrarily small diameter (see \cite[Theorem 2.4]{blr15eje1} or \cite[Theorem 3.1]{rz23}, where there are examples with the r-BSP, the CPCP and failing the slice-D2P). Furthermore, Theorems \ref{maintheorem} and \ref{c0compl} prove that the r-BWOP is a different property that the D2P in the most extreme possible way.

\begin{remark}\label{remark:versionccslices}
By the previous comments one could be tempted to define an analogous property to the SD2P for the radius, but it turns out that this one would be equivalent to the SD2P.

In fact, if every convex combination $C$ of slices of the unit ball of a Banach space $X$ has radius one, then by Lemma \ref{charradio} (with $x = 0$) we obtain that for any $\eps > 0$ there is a $c \in C$ with $\norm{c} \geq 1-\eps$, so by \cite[Theorem 3.1]{lmr} we conclude that $X$ has the SD2P.
\end{remark}

\begin{remark}\label{remark:conedaugaindex}
Recall that in \cite[Section 4]{rz18} the following Daugavet index of thickness is introduced:
\begin{equation*}
  \mathcal{T}(X) = \inf\left\lbrace r>0 \;\middle|\;
  \begin{tabular}{@{}l@{}}
    \text{ there exist $x\in S_X$ and a relatively weakly }\\ \text{ open $W$ in $B_X$ such that} $\emptyset\neq W \subset B(x,r)$
   \end{tabular}
  \right\rbrace.
\end{equation*}
The above index of thickness was introduced (in an equivalent formulation) to provide a quantitative measure of how far a Banach space $X$ is from having the \textit{Daugavet property}.

Observe that if a Banach space $X$ has the r-BWOP then $\mathcal T(X)\geq 1$. The converse does not hold true. Indeed, in \cite[Theorem 1.2]{rz23} it is proved that, given $\varepsilon>0$, there exists a Banach space $X$ which fails the r-BSP and such that $\mathcal T(X)\geq 2-\varepsilon$.
\end{remark}

We prove now a couple of results about the stability of the r-BWOP by $p$-sums which will allow us to prove Theorem \ref{maintheorem}. Observe that these results are analogous to the ones for diameter two properties. Let us start with the case of $p=\infty$.

\begin{proposition}\label{infsuma}
Let $X$ and $Y$ be Banach spaces. Then, the space $X \bigoplus_\infty Y$ has the \emph{r-BWOP} if, and only if, one of $X$ or $Y$ has the \emph{r-BWOP}.
\end{proposition}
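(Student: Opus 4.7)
My plan is to prove the two implications separately, exploiting throughout the duality $(X \oplus_\infty Y)^* = X^* \oplus_1 Y^*$, so that every basic relatively weakly open neighborhood in $B_{X \oplus_\infty Y}$ can be described by finitely many functionals of the form $(f_i, g_i) \in X^* \times Y^*$ acting coordinatewise.

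For the sufficient direction, suppose without loss of generality that $X$ has the r-BWOP, and let $W$ be a non-empty relatively weakly open subset of $B_{X \oplus_\infty Y}$. Pick $(x_0, y_0) \in W$ together with a basic neighborhood $U((x_0, y_0); (f_1, g_1), \ldots, (f_n, g_n); \varepsilon) \cap B_{X \oplus_\infty Y} \subset W$. The decisive observation is that fixing the second coordinate at $y_0$ kills the $g_i$-contributions: the set
$$V := \left\{ x \in B_X : |f_i(x - x_0)| < \varepsilon \text{ for all } i = 1, \ldots, n \right\}$$
is a non-empty relatively weakly open subset of $B_X$, and $V \times \{y_0\} \subset W$ since $\|y_0\| \leq 1$. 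Given an arbitrary $(x_1, y_1) \in X \oplus_\infty Y$ and $\delta > 0$, the r-BWOP of $X$ combined with Lemma \ref{charradio} yields $x \in V$ with $\|x - x_1\| \geq 1 - \delta$, and then $(x, y_0) \in W$ satisfies $\|(x, y_0) - (x_1, y_1)\|_\infty \geq \|x - x_1\| \geq 1 - \delta$. Lemma \ref{charradio} applied to $W$ then delivers $r(W) \geq 1$, while the bound $r(W) \leq 1$ is automatic from $W \subset B_{X \oplus_\infty Y}$.

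For the necessary direction I proceed by contrapositive. Assuming neither $X$ nor $Y$ has the r-BWOP, I choose non-empty relatively weakly open sets $W_X \subset B_X$ and $W_Y \subset B_Y$, centres $x^* \in X$, $y^* \in Y$, and some common $\eta > 0$ with $W_X \subset x^* + (1-\eta) B_X$ and $W_Y \subset y^* + (1-\eta) B_Y$. Then $W := W_X \times W_Y$ is a non-empty relatively weakly open subset of $B_{X \oplus_\infty Y}$ and, for every $(x, y) \in W$,
$$\|(x, y) - (x^*, y^*)\|_\infty = \max\bigl(\|x - x^*\|, \|y - y^*\|\bigr) \leq 1 - \eta,$$
so that $r(W) \leq 1 - \eta < 1$, contradicting the r-BWOP of $X \oplus_\infty Y$.

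I do not anticipate a serious obstacle: the argument mirrors the well-known stability of the D2P under $\ell_\infty$-sums, and the only points requiring care are the identification of the dual of $X \oplus_\infty Y$ and the verification that freezing one coordinate at a point of the unit ball produces a bona fide relatively weakly open slice in the other factor whose translates back to the sum remain in $B_{X \oplus_\infty Y}$.
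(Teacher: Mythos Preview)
Your proof is correct and follows essentially the same approach as the paper's: embed a relatively weakly open subset of $B_X$ into $W$ via a fixed second coordinate and apply the r-BWOP of $X$ for the forward direction, and take a product of small-radius weakly open sets for the contrapositive. The only cosmetic difference is that you freeze the second coordinate at $y_0$ (so that the $g_i$-terms vanish exactly), whereas the paper splits $\varepsilon$ into two halves to obtain a product $U_1\times U_2\subset U$; your variant is slightly cleaner but the idea is identical.
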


\begin{proof}
Let us assume that $X$ has the r-BWOP (the case for $Y$ is analogous) and let $U$ be a non-empty weakly open subset of $B_Z$, where $Z = X \bigoplus_\infty Y$. Without lost of generality we can consider that
$$U = \bigcap_{i=1}^n\{(x,y) \in B_Z: |f_i(x-x_0) + g_i(y-y_0)| < \eps\}$$
for some $f_1, \cdots, f_n \in X^*$, $g_1, \cdots, g_n \in Y^*$ and $(x_0, y_0) \in B_Z$. Then,
\begin{align*}
    & U_1 = \bigcap_{i=1}^n\{x \in B_X: |f_i(x-x_0)| < \eps/2\},\\
    & U_2 = \bigcap_{i=1}^n\{y \in B_Y: |g_i(x-y_0)| < \eps/2\}
\end{align*}
are non-empty weakly open subsets of $B_X$ and $B_Y$ respectively and clearly $U_1 \times U_2 \subset U$. Given any $\delta > 0$ and $(x,y) \in X \times Y$, as $X$ has the r-BWOP, Lemma \ref{charradio} guarantees the existence of some $z \in U_1$ with $\norm{x-z} > 1-\delta$, so we have that
$$\norm{(x,y) - (z,y_0)}_\infty \geq \norm{x - z} \geq 1-\delta.$$
As $(z,y_0) \in U_1 \times U_2 \subset U$ by Lemma \ref{charradio} we obtain that $r(U) \geq 1$, so that $r(U) = 1$.

Conversely, if we suppose that $X$ and $Y$ do not have the r-BWOP, then there are some weakly open subsets $U_X, U_Y$  of $X$ and $Y$ respectively such that $U_X \cap B_X, U_Y \cap B_Y \not = \emptyset$ and
$$r(U_X \cap B_X), r(U_Y \cap B_Y) < 1.$$
This implies the existence of some $0 < t  <1$ and $x \in X, y \in Y$ such that
\begin{align*}
    & U_X \cap B_X \subset x + tB_X, \\
    & U_Y \cap B_Y \subset y + tB_Y.
\end{align*}
Defining $U = (U_X \times U_Y) \cap B_{X \bigoplus_\infty Y}$ it is obviously a non-empty weakly open subset of the unit ball of $X \bigoplus_\infty Y$ that satisfies
$$U \subset (x,y) + t B_{X \bigoplus_\infty Y}.$$
The previous inclusion proves that $r(U) \leq t < 1$, so $X \bigoplus_\infty Y$ does not have the r-BWOP.
\end{proof}

For $1 \leq p < \infty$ we can extend the result to arbitrary sums:

\begin{proposition}\label{psuma}
Let $\{X_i\}_{i \in I}$ be a family of Banach spaces and $1 \leq p < \infty$. If $\ell_p(I, X_i)$ denotes the $\ell_p$-sum of the family Banach spaces $\{X_i\}_{i \in I}$, then the following are equivalent:
\begin{enumerate}
    \item Every $X_i$ has the \emph{r-BWOP}.
    \item The space $\ell_p(I, X_i)$ has the \emph{r-BWOP}.
\end{enumerate}
\end{proposition}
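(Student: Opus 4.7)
The plan is to mimic the bidirectional argument of Proposition \ref{infsuma}, with substantial adjustments in both directions. In the forward direction the $\ell_p$-budget constraint $\sum_i \|u(i)\|^p \le 1$ forces a perturbation of $z$ across many coordinates simultaneously, and in the reverse direction a careful norm-control refinement is required because otherwise the mass that escapes coordinate $i_0$ allows the remaining coordinates to restore the radius to $1$.

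For the forward direction, given a basic relatively weakly open set $U = \{w \in B_Z : |f_k(w - z)| < \varepsilon\}$ in $Z = \ell_p(I, X_i)$, a target vector $x \in Z$, and $\delta > 0$, by Lemma \ref{charradio} one must produce $u \in U$ with $\|x - u\| \ge 1 - \delta$. First, choose a finite $F \subset I$ so large that $\|z|_{I \setminus F}\|_p$ and $\|x|_{I \setminus F}\|_p$ are small, and if $p > 1$ also so that each $\|f_k|_{I \setminus F}\|_q$ is small (for $p = 1$ only the tail of $z$ matters, as the $f_k$ are bounded). Distribute the full unit budget by picking $\alpha_i \ge \|z(i)\|$ with $\sum_{i \in F}\alpha_i^p = 1$, which is possible since $\sum_{i \in F}\|z(i)\|^p \le 1$. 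For each $i \in F$ with $\alpha_i > 0$, apply the r-BWOP of $X_i$ to the non-empty weakly open subset
$$\{w \in B_{X_i} : |f_k(i)(\alpha_i w - z(i))| < \eta,\ k = 1, \ldots, n\}$$
of $B_{X_i}$ (which contains $z(i)/\alpha_i$), obtaining by Lemma \ref{charradio} applied to $x(i)/\alpha_i$ a point $w_i$ with $\|x(i)/\alpha_i - w_i\| \ge 1 - \eta'$. Set $v_i = \alpha_i w_i$ and define $u \in Z$ by $u(i) = v_i$ on $F$ and $u(i) = 0$ off $F$. Then $\|u\|_p^p \le \sum_{i\in F} \alpha_i^p = 1$; the identity
$$f_k(u - z) = \sum_{i \in F} f_k(i)(v_i - z(i)) - \sum_{i \notin F} f_k(i)(z(i))$$
is controlled by $|F|\eta$ plus a tail estimated by Hölder (for $p > 1$) or trivially (for $p = 1$), giving $u \in U$; and
$$\|x - u\|_p^p \ge \sum_{i \in F}\|x(i) - v_i\|^p \ge (1 - \eta')^p \sum_{i \in F}\alpha_i^p = (1 - \eta')^p,$$
so $\|x - u\| \ge 1 - \delta$ for suitably small parameters.

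For the reverse direction, argue contrapositively: suppose $X_{i_0}$ fails the r-BWOP, so there is a non-empty weakly open $V \subset B_{X_{i_0}}$ with $V \subset y + r B_{X_{i_0}}$ for some $r < 1$. The naive lift $\{u \in B_Z : u(i_0) \in V\}$ is insufficient, because if $V$ contains a low-norm vector the other coordinates of $u$ can absorb the remaining $\ell_p$-budget and the radius stays at $1$. The fix is to refine $V$ to a non-empty weakly open $V' \subset V$ on which $\|v\| > c$ for some $c \in (r, 1)$. In the infinite-dimensional case, inside a basic neighborhood $\{x : |g_j(x - v_0)| < \eta_0\} \cap B_{X_{i_0}}$ of $v_0 \in V$ one finds $w \in \bigcap_j \ker g_j$ of norm one (finite codimension inside an infinite-dimensional space), and the intermediate value theorem applied to $t \mapsto \|v_0 + tw\|$ yields $v_1 \in V$ with $\|v_1\| = 1$; picking $\psi \in S_{X_{i_0}^*}$ with $\psi(v_1) = 1$, the set $V' = V \cap \{x \in B_{X_{i_0}} : \psi(x) > c\}$ works. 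In the finite-dimensional case weak and norm topologies coincide and a small norm ball centered at a point of $S_{X_{i_0}}$ does the job. Now put $U = \{u \in B_Z : u(i_0) \in V'\}$ (non-empty weakly open) and take $x \in Z$ with $x(i_0) = y$ and $x(i) = 0$ otherwise; for any $u \in U$, $\|y - u(i_0)\| \le r$ and $\sum_{i \neq i_0}\|u(i)\|^p \le 1 - \|u(i_0)\|^p < 1 - c^p$, so
$$\|x - u\|_p^p < r^p + 1 - c^p < 1$$
uniformly in $u$, hence $r(U) < 1$.

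The main obstacle is this norm-control refinement in the reverse direction: without bounding $\|v\|$ below on $V'$, the flexibility of the $\ell_p$-structure lets mass migrate away from coordinate $i_0$ and the intended counterexample actually has radius $1$.
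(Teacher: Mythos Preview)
Your proof is correct, and both directions differ in execution from the paper's.

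For $(1)\Rightarrow(2)$ the paper first reduces (via \cite[Lemma~1.4]{ivakhno04}) to sets $W=\bigcap_j S(B_X,\phi_j,\alpha)$ with $\alpha<\varepsilon$; the slice structure forces $\|z\|_p>1-\alpha$ for $z\in W$, so taking weights $\alpha_i=\|z_i\|$ already gives $\|w-x\|_p>1-\alpha>1-\varepsilon$. You bypass that lemma by inflating to $\alpha_i\ge\|z(i)\|$ with $\sum_{i\in F}\alpha_i^p=1$, obtaining $\|u-x\|_p\ge1-\eta'$ directly; this is self-contained at the cost of the extra normalization step. (Incidentally, you do not need the tails of $x$ or of the $f_k$ to be small: H\"older with the full $\|f_k\|_q$ and a small tail of $z$ already control $\sum_{i\notin F}f_k(i)(z(i))$.) For $(2)\Rightarrow(1)$ the paper argues directly rather than contrapositively: lift $W_k=\bigcap_j S(B_{X_k},f_j,\alpha)$ to $W=\bigcap_j S(B_X,f_j\circ\pi_k,\alpha)$, apply r-BWOP of $X$ to get $w$ far from the embedded $x$, and use $f_1(w_k)>1-\alpha$ to bound the leakage $\sum_{i\ne k}\|w_i\|^p<1-(1-\alpha)^p$. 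This is precisely the norm control you isolate as the obstacle, obtained for free from the slice form; your refinement via a sphere point plus a norming slice achieves the same end but with more work (and the finite-dimensional branch could simply start from a small norm ball about a sphere point).
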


\begin{proof}
Along the proof we will denote $X = \ell_p(I,X_i)$.

(1) $\Rightarrow$ (2) We will use Lemma \ref{charradio} to prove that $X$ has the r-BWOP. So given any non-empty weakly open subset $W$ of $B_X$, $x \in X$ and $\eps > 0$ we have to prove that there exists some $w \in W$ such that
\begin{align}\label{psumaconcl}
\norm{w - x}_p \geq 1 - \eps.
\end{align}
Furthermore, taking into account that the slices of $B_X$ form a subbase for the weak topology in $B_X$ and using \cite[Lemma 1.4]{ivakhno04} we can assume that
$$W = \bigcap_{j=1}^n S(B_X, \phi_j, \alpha)$$
for some $\phi_j \in S_{X^*}$ and $0 < \alpha < \min\{1, \eps\}$. By the duality of $\ell_p$ spaces we also know that for every $j = 1, \cdots, n$ there exists some $f^j \in \ell_q(I, X_i^*)$ with norm one, where $q$ is the Hölder conjugate of $p$ (that is, it satisfies $\frac{1}{p}+\frac{1}{q}=1$), such that
$$\phi_j = \sum_{i \in I} f_i^j \circ \pi_i$$
where $\pi_i$ is the $i$-th projection.

Take any $z \in W$, that is, satisfying for any $j= 1, \cdots, n$
$$\phi_j(z) = \sum_{i \in I} f_i^j(z_i) > 1- \alpha.$$
Observe that given $0 < 2 \eta < \min \phi_j(z) - (1 - \alpha)$ and $0 < \Delta < (1-\alpha +2\eta)^p - (1-\alpha + \eta)^p$ there exists some finite subset $F$ of $I$ such that for any $j=1, \cdots, n$ we have that
\begin{align}\label{psuma->1}
\sum_{i \in F} f_i^j(z_i) > \sum_{i \in I} f_i^j(z_i) - \eta = \phi_j(z) - \eta > 1- \alpha + \eta.
\end{align}
and
\begin{align}\label{psuma->arreglo1}
    \sum_{i \in F} \norm{z_i}^p & > \sum_{i \in I} \norm{z_i}^p - \Delta = \norm{z}_p^p - \Delta \geq \abs{\phi_j(z)}^p - \Delta > \\
    & > (1-\alpha + 2\eta)^p - \Delta > (1-\alpha + \eta)^p.\nonumber
\end{align}
In the previous inequalities we can change $F$ for $G = \{i \in F: \norm{z_i} \not = 0\}$, so without lost of generality we will assume that $\norm{z_i} \not = 0$ for any $i \in F$.

We can now define for every $i \in F$ the following non-empty weakly open subset of $B_{X_i}$
$$W_i = U\left( \dfrac{z_i}{\norm{z_i}}; f_i^1, \cdots, f_i^n; \dfrac{\delta}{\norm{z_i}} \right) \cap B_{X_i}$$
with $0 < \delta < \eta/\abs{F}$ ($\vert F\vert$ stands for the cardinality of $F$) and, as $X_i$ has the r-BWOP, by Lemma \ref{charradio} there exists some $y_i \in W_i$ with 
\begin{align}\label{psuma->3}
\norm{x_i - y_i} \geq 1 - \theta
\end{align}
for $0 < \theta < \frac{\eta}{1- \alpha + \eta} \min_{i \in F} \norm{z_i}$.

Defining $w \in X$ such that $w_i = \norm{z_i} y_i$ if $i \in F$ and $w_i = 0$ otherwise, let us see that it is the desired point:
\begin{itemize}
    \item $w \in W$ because, as every $y_i \in B_{X_i}$ and $z \in B_X$, we get
    $$\norm{w}_p^p = \sum_{i \in F} \norm{z_i}^p \norm{y_i}^p \leq \sum_{i \in F} \norm{z_i}^p \leq \norm{z}^p \leq 1.$$
    Moreover, for every $j= 1, \cdots,n$, by taking into account (\ref{psuma->1}) and that $y_i \in W_i$ for any $i \in F$, we obtain that
    \begin{align*}
        \phi_j(w) & = \sum_{i \in I} f_i^j(w_i) = \sum_{i \in F} \norm{z_i} f_i^j(y_i) > \sum_{i \in F} \left(f_i^j(z_i) - \delta\right) = \\
        & =\sum_{i \in F} f_i^j(z_i) - \abs{F}\delta > 1-\alpha + \eta - \abs{F}\delta > 1- \alpha.
    \end{align*}
    \item $w$ satisfies (\ref{psumaconcl}) because by (\ref{psuma->arreglo1}) and (\ref{psuma->3})
    \begin{align*}
        \norm{w - x}_p^p & = \sum_{i \in I} \norm{w_i - x_i}^p \geq \sum_{i \in F} \norm{\norm{z_i}y_i - x_i}^p \geq \\
        & \geq \sum_{i \in F} \left( \norm{x_i - y_i} - \abs{1 - \norm{z_i}} \norm{y_i} \right)^p \geq \\
        & \mathop{\geq}\limits^{\mbox{\tiny{(\ref{psuma->3})}}}  \sum_{i \in F} \left( 1-\theta - (1 - \norm{z_i}) \right)^p = \sum_{i \in F} \left( \norm{z_i} - \theta \right)^p = \\
        & = \sum_{i \in F} \norm{z_i}^p \left( 1 - \dfrac{\theta}{\norm{z_i}}\right)^p >\\
        & > \left ( 1- \dfrac{\eta}{1-\alpha + \eta} \right)^p \sum_{i \in F} \norm{z_i}^p \geq \\
        & \mathop{\geq}\limits^{\mbox{\tiny{(\ref{psuma->arreglo1})}}} \left ( 1- \dfrac{\eta}{1-\alpha + \eta} \right)^p (1-\alpha + \eta)^p = \\
        & = (1-\alpha)^p > (1-\eps)^p.
    \end{align*}
\end{itemize}
This concludes this implication.

(2) $\Rightarrow$ (1). As before, by Lemma \ref{charradio} it is enough to fix $k \in I$, a non-empty weakly open subset $W_k$ of $B_{X_k}$, $\eps > 0$ and a point $x \in X_k$, and prove that there is some point $y \in W_k$ such that $\norm{x - y} \geq 1 - \eps$.

We can assume, without lost of generality, that
$$W_k = \bigcap_{j=1}^n S(B_{X_{k}}, f_j, \alpha)$$
for some $f_1, \cdots, f_n \in S_{X_k^*}$ and $0 < \alpha < \eps$ and define the non-empty weakly open subset of $B_X$
$$W = \bigcap_{j=1}^n S(B_{X}, f_j \circ \pi_k, \alpha).$$
As $X$ has the r-BWOP, by Lemma \ref{charradio} we obtain a $w \in W$ such that
\begin{align}\label{radiogrande}
\norm{w - z}_p \geq 1 - \eta
\end{align}
where $z \in X$ is defined as $z_k = x$ and $z_i = 0$ otherwise, and 
$$0 < \eta < 1-\left( (1-\eps)^p + 1 - (1 -\alpha)^p \right)^{1/p}.$$
By the definition of $W$ it is obvious that $w_k \in W_k$, so
$$\norm{w_k} \geq f_1(w_k) > 1- \alpha,$$
and
\begin{align}\label{psuma<-1}
\sum_{i \in I\setminus\{k\}} \norm{w_i}^p = \norm{w}_p^p - \norm{w_k}^p < 1-(1-\alpha)^p.
\end{align}
Using (\ref{radiogrande}) and (\ref{psuma<-1}) we obtain that
$$(1-\eta)^p \mathop{\leq}\limits^{\mbox{\tiny{(\ref{radiogrande})}}} \norm{w - z}_p^p = \norm{w_k - x}^p + \sum_{i \in I\setminus\{k\}} \norm{w_i}^p \mathop{<}\limits^{\mbox{\tiny{(\ref{psuma<-1})}}} \norm{w_k - x}^p + 1-(1-\alpha)^p,$$
so
$$\norm{w_k - x}^p > (1-\eta)^p + (1-\alpha)^p - 1 > (1-\eps)^p$$
and $w_k$ is the point we were looking for.
\end{proof}

\section{Proof of Theorem~\ref{maintheorem}}\label{section:demoprincipal}

The aim of the present section is to provide the proof of Theorem~\ref{maintheorem}, which will be made in several steps. The first one is to obtain an adequate renorming for $c_0\oplus_\infty\mathbb R$. In order to do so, we will get some ideas from \cite[Theorem 2.5]{blr15eje2}, for which we will need to recall a construction from Argyros, Odell, and Rosenthal \cite{aor}. 

Pick a strictly decreasing null sequence $\{\varepsilon_n\}$ in $\mathbb R^+$. We construct an increasing sequence of closed, bounded and convex subsets $\{K_n\}$ in the space $c_0$ and a sequence $\{g_n\}$ in $c_0$ as follows:

First define $K_1=\{e_1\}$, $g_1=e_1$ and $K_2=\conv(e_1, e_1+e_2)$. Choose $l_2>1$ and $g_2,\ldots ,g_{l_2}\in K_2$ an $\varepsilon_2$-net in $K_2$. Assume that $n\geq 2$ and that $m_n,\ l_n
,\ K_n$, and $\{g_1,\ldots ,g_{l_n}\}$ have been constructed, with
$K_n\subseteq B_{\lspan\{e_1,\ldots ,e_{m_n}\}}$ and $g_i\in K_n$ for every
$1\leq i\leq l_n$ such that $\{g_{l_{n-1}+1}, \cdots, g_{l_n}\}$ is an $\eps_n$-net in $K_n$. Define $K_{n+1}$ as $$K_{n+1}=\conv(K_n\cup
\{g_i+e_{m_n+i}\colon 1\leq i\leq l_n\}).$$ Consider $m_{n+1}=m_n+l_n$ and
choose $\{g_{l_{n}+1},\ldots ,g_{l_{n+1}}\}\subset K_{n+1}$ so that they form an $\varepsilon_{n+1}$-net in
$K_{n+1}$. Finally, we define $K_0=\overline{\cup_n K_n}$. 

It follows that $K_0$ is a non-empty closed, bounded and convex subset
of $c_0$ such that $x(n)\geq 0$ for every $n\in \natu$ and $\Vert
x\Vert_{\infty} = 1$ for every $x\in K_0$, so $\diam(K_0)\leq 1$. Furthermore, for a fixed $i$, we have from the construction that
$\{g_i+e_{m_n+i}\}_n$ is a sequence in $K_0$ (for $n$ large enough) which is weakly convergent to
$g_i$, and $\Vert (g_i-e_{m_n+i})-g_i\Vert=\Vert e_{m_n+i}\Vert=1$
holds for every $n$. Then $\diam(K_0)=1$.

We will freely use the set $K_0$ and the above construction in Lemma~\ref{renormacion}. Also observe that, from the construction, it follows that
$$K_0=\overline{\{g_i\colon i\in\natu\}}^{w}=\overline{\{g_i\colon i\in\natu\}}.$$
Finally one should also notice that, by the inductive construction, $g_i$ has finite support for every $i\in\mathbb N$.

Now we get the desired result for $c_0\oplus_\infty\mathbb R$.

\begin{lemma}\label{renormacion}
For every $\varepsilon \in (0,1)$, there exists an equivalent renorming $\abs{\cdot}_\eps$ of $c_0\bigoplus_\infty\R$ satisfying the \emph{r-BWOP} and such that
$$\inf\left\{ \diam_{\abs{\cdot}_\eps}(S): S \text{ \emph{slice of} } B_{\abs{\cdot}_\eps} \right\} \leq  \frac{1}{1-\varepsilon}.$$
\end{lemma}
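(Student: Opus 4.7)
I would define the renorming via the unit ball
\[
B_\varepsilon := \overline{\co}\Bigl( (K_0 \times \{1\}) \cup (-K_0 \times \{-1\}) \cup (1-\varepsilon) B_{c_0 \oplus_\infty \R}\Bigr),
\]
in the spirit of \cite[Theorem~2.5]{blr15eje2}. This set is absolutely convex and sandwiched between $(1-\varepsilon)B_{c_0\oplus_\infty\R}$ and $B_{c_0\oplus_\infty\R}$, so its Minkowski functional $\abs{\cdot}_\varepsilon$ is an equivalent norm on $c_0\oplus_\infty\R$ satisfying $\norm{\cdot}_{c_0\oplus_\infty\R}\le\abs{\cdot}_\varepsilon\le\tfrac{1}{1-\varepsilon}\norm{\cdot}_{c_0\oplus_\infty\R}$. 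If needed to make the r-BWOP verification below go through, one can enrich the generating set, e.g.\ by adjoining extra vectors of the form $\pm(e_k,0)$, without affecting the slice bound.

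For the slice estimate I would consider $S_\alpha:=\{(x,t)\in B_\varepsilon:t>1-\alpha\}$ with $\alpha\in(0,\varepsilon)$ small. Writing any point of $B_\varepsilon$ as a convex combination $\lambda_1(x_1,1)+\lambda_2(-x_2,-1)+\lambda_3(z,u)$ from the generating set (with $x_i\in K_0$ and $(z,u)\in(1-\varepsilon)B_{c_0\oplus_\infty\R}$), the condition $t>1-\alpha$ forces $\lambda_2+\varepsilon\lambda_3<\alpha$, whence $\lambda_1>1-\alpha(1+\varepsilon^{-1})$ and $x=\lambda_1 x_1+O(\alpha)$ in sup-norm. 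Two points of $S_\alpha$ thus differ in their first coordinate by an element of $K_0-K_0$ up to $O(\alpha)$, and since $\diam_{c_0}(K_0)=1$ we obtain $\diam_{\norm{\cdot}_{c_0\oplus_\infty\R}}(S_\alpha)\le1+O(\alpha)$. Combining with the norm comparison above and letting $\alpha\to0$ yields the claimed bound $\tfrac{1}{1-\varepsilon}$.

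For the r-BWOP, given a nonempty weakly open $U\subset B_\varepsilon$, a target $(a,s)\in c_0\oplus\R$, and $\delta>0$, we must produce $(y,r)\in U$ with $\abs{(y-a,r-s)}_\varepsilon\ge1-\delta$. Fix $(x_0,t_0)\in U$ with a convex decomposition as above; using the norm-density of $\{g_\ell\}_\ell$ in $K_0$ and the finite-support property of each $g_\ell$, approximate $x_1$ by some $g_i$ and replace $(x_1,1)$ by $(g_i+e_{m_n+i},1)\in K_0\times\{1\}$ for $n$ large, and symmetrically for $(-x_2,-1)$ using some $g_j$; finally shift the slab component $(z,u)$ by $\pm(1-\varepsilon)e_\ell$ inside $(1-\varepsilon)B_{c_0\oplus_\infty\R}$. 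For the new indices $m_n+i,m_{n'}+j,\ell$ chosen large enough, the $\ell_1$-functionals defining $U$ are nearly vanishing there, so $(y,r)$ remains in $U\cap B_\varepsilon$, and since $a\in c_0$ the values $a(m_n+i),a(m_{n'}+j),a(\ell)$ are close to zero, making the resulting displacement large in those coordinates.

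The main obstacle is that the direct sup-norm estimate only delivers a displacement of size $\max(\lambda_1,\lambda_2,(1-\varepsilon)\lambda_3)$, which can be as small as $\tfrac13$ — witnessed by $(x_0,t_0)=(0,0)$ realised as $\tfrac12(g_i,1)+\tfrac12(-g_i,-1)$. Reaching displacement close to $1$ in the new norm requires either (a)~exploiting that $\abs{\cdot}_\varepsilon$ can be strictly larger than $\norm{\cdot}_{c_0\oplus_\infty\R}$ on vectors with several large coordinates of alternating sign (which do not lie in $B_\varepsilon$ by its very definition), or (b)~enriching the generating set of $B_\varepsilon$ so that every coordinate direction $\pm(e_k,0)$ is itself an extreme direction, so that from any $(x_0,t_0)\in B_\varepsilon$ one can reach a weakly-nearby point differing by approximately $\pm e_k$ in a single coordinate. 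Carrying this out rigorously — as I expect \cite[Theorem~2.5]{blr15eje2} does in its analogous situation — is the technical heart of the lemma.
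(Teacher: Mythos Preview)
You correctly identify the skeleton of the construction and, crucially, the obstacle: with a three-way convex decomposition $\lambda_1(x_1,1)+\lambda_2(-x_2,-1)+\lambda_3(z,u)$, independently bumping each piece along a different coordinate only yields a sup-norm displacement of order $\max(\lambda_1,\lambda_2,(1-\varepsilon)\lambda_3)$, which can be small. But you do not supply the two ideas that actually dissolve this obstacle, and your suggested fixes (a) and (b) do not do the job as stated.

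The paper's proof differs from your plan in two essential ways. First, the ``slab'' is not $(1-\varepsilon)B_X$ but the strictly larger set $C_\varepsilon:=(1-\varepsilon)B_X+\varepsilon B_Y$ with $Y=c_0\oplus_\infty\{0\}$; this is exactly what makes every $(\pm e_k,0)$ sit in $C_\varepsilon$, so that a slab element with $k$-th coordinate $0$ can be shifted by $e_k$ \emph{inside} $C_\varepsilon$ and acquire $k$-th coordinate exactly $1$ (not $1-\varepsilon$). Second --- and this is the key step you are missing --- one invokes \cite[Lemma~2.4]{blr15eje2}: since $\tfrac12(K-K)\subset C_\varepsilon$, one has $B_\varepsilon=\overline{\co}(K\cup C_\varepsilon)\cup\overline{\co}(-K\cup C_\varepsilon)$, so any point of $B_\varepsilon$ admits a \emph{two-term} decomposition $\lambda(y,1)+(1-\lambda)c$ with $y\in K_0$ and $c\in C_\varepsilon$ (or the symmetric $-K$ version). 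The troublesome $\lambda_2$-term is gone. One then approximates $y$ by some $g_i$, replaces $c$ by a finitely supported element $(1-\varepsilon)(x_0',\alpha_0)+\varepsilon(x_1',0)$, and shifts \emph{all three} of $g_i,x_0',x_1'$ by the \emph{same} vector $e_{m_n+i}$. The resulting point lies in $\co(K\cup C_\varepsilon)\subset B_\varepsilon$, converges weakly to a point of $W$, and its $(m_n+i)$-th coordinate equals $\lambda\cdot 1+(1-\lambda)\bigl[(1-\varepsilon)\cdot 1+\varepsilon\cdot 1\bigr]=1$ exactly, which gives $\abs{\cdot}_\varepsilon$-distance at least $1$ from any finitely supported target. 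Your plan of shifting the $K$, $-K$ and slab pieces along \emph{different} coordinates cannot recover this, and merely adjoining $\pm(e_k,0)$ to the generating set does not eliminate the three-way split; the decomposition lemma is indispensable. Your slice-diameter argument, on the other hand, is essentially the paper's.
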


\begin{proof}
Write $X = c_0\bigoplus_\infty\R$ and $Y=c_0\bigoplus_\infty \{0\}$.

Let $K_0 \subset c_0$ be the set previously defined and consider
$$K = \{(g, 1) \in X: g \in K_0\}.$$
Now we can define
$$B_\eps = \cco\left( K \cup -K \cup [(1-\eps)B_X  + \eps B_Y] \right),$$
which defines an equivalent norm $\abs{\cdot}_\eps$ on $X$ whose (closed) unit ball is $B_\eps$. Observe that
\begin{align}\label{1}
    (1- \eps) B_X \subset B_\eps \subset B_X,
\end{align}
so we infer
\begin{align}\label{2}
    \norm{(x,\alpha)}_\infty \leq \abs{(x,\alpha)}_\eps \leq \dfrac{1}{1-\eps}\norm{(x, \alpha)}_\infty,\, \forall x \in c_0, \alpha \in \R.
\end{align}

Let us begin by proving that $(X, \abs{\cdot}_\eps)$ enjoys the r-BWOP.

In order to do so, select a non-empty relatively weakly open subset $W$ of $X$ such that $W \cap B_\eps \not = \emptyset$, and let us prove that 
$$r(W \cap B_\eps) \geq 1.$$
To this end, by Lemma \ref{charradio} and the structure of $c_0$ it suffices to prove that given any $(x, \alpha) \in X$ where $x$ is finitely supported we can find a $z \in W \cap B_\eps$ satisfying
\begin{align}\label{3}
    \abs{(x,\alpha) - z}_\eps \geq 1.
\end{align}
Since the support of $x$ is finite, it is enough to find a sequence $\{(x_n, \alpha_n)\} \subset W \cap B_\eps$ such that, given $m \in \N$, there exists $n \in \N$ and $p \geq m$ with $|x_n(p)| = 1$. Indeed, as $x$ is finitely supported we can select $n_0 \in \N$ such that $x(n) = 0$ holds for every $n \geq n_0$, so taking $n \in \N$ and $p \geq n_0$ with $\abs{x_n(p)} = 1$ we get
$$\abs{(x, \alpha) - (x_n, \alpha_n)}_\eps \geq \norm{(x, \alpha) - (x_n, \alpha_n)}_\infty \geq \abs{x(p) - x_n(p)} = 1.$$
Consequently let us find such sequence.

Since $K$ and $C_\eps = (1-\eps)B_X + \eps B_Y$ are convex subsets satisfying $\frac{1}{2}(K-K) \subset C_\eps$, \cite[Lemma 2.4]{blr15eje2} implies
$$B_\eps = \cco(K \cup C_\eps) \cup \cco(-K \cup C_\eps).$$

By the above then either $W\cap  \cco(K \cup C_\eps)\neq \emptyset$ or $W\cap \cco(-K \cup C_\eps)\neq \emptyset$. We can thus assume, without lost of generality, that $W\cap  \cco(K \cup C_\eps)\neq \emptyset$ and, since $W$ is weakly open, select $w \in W \cap \co(K \cup C_\eps)$, so we can write
$$w = \lambda(y,1) + (1-\lambda)[(1-\eps)(x_0, \alpha_0) + \eps(x_1, 0)]$$
for suitable $\lambda \in [0,1]$, $y \in K_0$, $(x_0, \alpha_0) \in B_X$ and $x_1 \in B_{c_0}$. 

On the other hand, using once again that $W$ is weakly open there exists an $\eta>0$ such that $w + \eta B_X \subset W$ and, since
$$y \in K_0  = \overline{\{g_i: i \in \N\}},$$
we can find a $g_i$ such that
\begin{align}\label{4}
\norm{y - g_i}_\infty < \eta.
\end{align}
Moreover, since $x_0, x_1 \in c_0$ there are finitely supported elements $x_0', x_1' \in c_0$ verifying
\begin{align}
    & \norm{x_j'}_\infty \leq \norm{x_j}_\infty,\, j=0, 1 \label{5}\\
    & \norm{x_j - x_j'}_\infty < \eta,\, j = 0, 1. \label{6}
\end{align}
If we define
$$w' = \lambda(g_i,1) + (1-\lambda)[(1-\eps)(x_0', \alpha_0) + \eps(x_1', 0)],$$
(\ref{5}) and (\ref{6}) imply
$$\norm{w -w'}_\infty = \norm{\lambda(y-g_i) + (1-\lambda)[(1-\eps)(x_0 - x_0') + \eps (x_1 - x_1')]}_\infty < \eta,$$
so $w' \in W \cap \co(K \cup C_\eps)$.

By the definition of $K_0$ we can find a strictly increasing sequence $\{m_n\} \subset \N$ such that
\begin{align}\label{7}
g_i + e_{m_n+i} \in K_0, \, \forall n \in \N.
\end{align}
Furthermore, since $g_i, x_0', x_1'$ are finitely supported we can find $n_0 \in \N$ satisfying
\begin{align}\label{8}
g_i(n) = x_0'(n) = x_1'(n) = 0, \, \forall n \geq n_0.
\end{align}
Finally, define
$$w_n = \lambda(g_i + e_{m_n+i},1) + (1-\lambda)[(1-\eps)(x_0' + e_{m_n+i}, \alpha_0) + \eps(x_1' + e_{m_n+i}, 0)].$$
Clearly the sequence $\{w_n\}$ is weakly convergent to $w'$ and, since $w'$ belongs to the weakly open set $W$, we deduce the existence of some $n_1 \in \N$ verifying
\begin{align}\label{9}
w_n \in W, \, \forall n \geq n_1.
\end{align}
Taking $N=\max\{n_0, n_1\}$ we get that $\{w_n\}_{n=N}^\infty$ is the desired sequence as, by (\ref{7}), (\ref{8}) and (\ref{9}), it is contained in $W \cap \co(K \cup C_\eps)$ and by (\ref{8}) we get that, given $m \in \N$, we can select $n \geq N$ such that $m_n \geq m$, obtaining
$$\abs{\left[ \lambda(g_i + e_{m_n+i}) + (1-\lambda)[(1-\eps)(x_0' + e_{m_n+i}) + \eps(x_1' + e_{m_n+i})] \right](m_n + i)} = 1.$$

In order to prove the second assertion it is enough to show that, given $0 < t < 1$, the slice
$$S_t = \{(x, \alpha) \in B_\eps: \pi_2(x,\alpha) > 1- t\}$$
satisfies that
\begin{align}\label{10}
\diam_\eps(S_t) \leq \dfrac{1}{1-\eps} + \dfrac{3t}{\eps}.   
\end{align}
In order to do so, take $(x, \alpha), (y, \beta) \in S_t$. By a density argument we can assume that both elements belong to $S_t \cap \co(K \cup -K \cup C_\eps)$, so we can find $\lambda_1, \lambda_2, \lambda_3, \lambda_1', \lambda_2', \lambda_3' \geq 0$ with $\sum \lambda_i = \sum\lambda_i' = 1$, $a, a', b, b' \in K_0$, $(x_0, \alpha_0), (y_0, \beta_0) \in B_X$ and $x_1, y_1 \in B_{c_0}$ satisfying
\begin{align*}
& (x, \alpha) = \lambda_1(a,1) - \lambda_2(b,1) + \lambda_3 [(1-\eps)(x_0, \alpha_0) + \eps(x_1, 0)], \\
& (y, \beta) = \lambda_1'(a',1) - \lambda_2'(b',1) + \lambda_3' [(1-\eps)(y_0, \beta_0) + \eps(y_1, 0)].
\end{align*}
Since $(x,\alpha) \in S_t$ we get
$$1-t < \lambda_1 - \lambda_2 + \lambda_3(1-\eps)\alpha_0 \leq \lambda_1 - \lambda_2 + (1-\eps) \lambda_3 = 1 - 2 \lambda_2 - \eps\lambda_3,$$
so $2 \lambda_2 + \eps\lambda_3 < t$ and thus
\begin{align}\label{17}
\lambda_2 + \lambda_3 = \dfrac{1}{\eps}(\eps \lambda_2 + \eps \lambda_3) < \dfrac{1}{\eps}(2 \lambda_2 + \eps \lambda_3) < \dfrac{t}{\eps}
\end{align}
or, equivalently,
\begin{align}\label{18}
\lambda_1 > 1- \dfrac{t}{\eps}.
\end{align}
In a similar way we infer
\begin{align}
& \lambda_2' + \lambda_3' < \dfrac{t}{\eps}, \label{19}\\
& \lambda_1' > 1- \dfrac{t}{\eps}. \label{20}
\end{align}
Putting all together and taking into account that $a, a', b, b' \in K_0$ we get
$$(a,1), (a',1), (b,1), (b',1), (1-\eps)(x_0, \alpha_0) + \eps(x_1,0), (1-\eps)(y_0, \beta_0) + \eps(y_1,0) \in B_\eps,$$
so (\ref{2}), (\ref{17}), (\ref{18}), (\ref{19}), (\ref{20}) and the fact that $\diam(K_0) = 1$ allows us to conclude that
\begin{align*}
& \abs{(x, \alpha) - (y, \beta)}_\eps \leq \abs{\lambda_1 (a, 1) - \lambda_1'(a',1)}_\eps + \lambda_2 + \lambda_3 + \lambda_2' + \lambda_3' <\\
& \abs{\lambda_1 (a, 1) - \lambda_1'(a',1)}_\eps + \dfrac{2t}{\eps} \leq  \abs{\lambda_1 - \lambda_1'} \abs{(a,1)}_\eps + \lambda_1' \abs{(a,1) - (a',1)}_\eps + \dfrac{2t}{\eps}\\
& < \dfrac{t}{\eps} + \abs{(a-a', 0)}_\eps + \dfrac{2t}{\eps} \leq   \dfrac{1}{1-\eps}\norm{a-a'}_\infty + \dfrac{3t}{\eps} \leq \dfrac{1}{1-\eps}+ \dfrac{3t}{\eps}.
\end{align*}
The arbitrariness of $(x,\alpha),(y,\beta)\in S_t$ implies that $\diam_\eps(S_t) \leq \frac{1}{1-\eps} + \frac{3}{\eps}t$, as desired.
\end{proof}

Making use of Lemma~\ref{renormacion} and Proposition~\ref{infsuma} we are able to prove the following theorem.

\begin{theorem}\label{c0compl}
Let $X$ be a Banach space containing a complemented copy of $c_0$ and let $\delta \in (0,1)$. Then there exists an equivalent norm $\abs{\cdot}_\delta$ on $X$ under which $X$ has the \emph{r-BWOP} and
$$\inf\left\{ \diam_{\abs{\cdot}_\delta}(S): S \text{ \emph{slice of} } B_{\abs{\cdot}_\delta} \right\} \leq 1+\delta.$$
In particular, this happen for every separable Banach space containing $c_0$.
\end{theorem}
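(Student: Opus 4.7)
The plan is to adapt the construction of Lemma \ref{renormacion} to the general setting, replacing the one-dimensional factor $\R$ with the chosen complement $Z$ of $c_0$ in $X$. Since $c_0$ is complemented in $X$, one may (after an equivalent first renorming) assume $X = c_0 \oplus_\infty Z$ isometrically. Fix $\eps \in (0,1)$ with $\frac{1}{1-\eps} < 1 + \delta/2$, a unit vector $z_0 \in S_Z$, and (by Hahn--Banach) a norming functional $z_0^* \in S_{Z^*}$ with $z_0^*(z_0) = 1$. Let $K_0 \subset c_0$ be the Argyros--Odell--Rosenthal set described before Lemma \ref{renormacion}, and set $K := \{(g, z_0) : g \in K_0\}$, $Y := c_0 \oplus \{0\}$, and
\[
B_\delta := \cco\!\left( K \cup (-K) \cup \left[(1-\eps)B_X + \eps B_Y\right] \right).
\]
As in Lemma \ref{renormacion}, $B_\delta$ is the unit ball of an equivalent norm $|\cdot|_\delta$ on $X$ satisfying $(1-\eps) B_X \subset B_\delta \subset B_X$.

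For the r-BWOP I would reproduce the first half of the proof of Lemma \ref{renormacion} verbatim, substituting $(g, z_0)$ for $(g, 1)$ throughout. The only feature requiring a recheck is the weakly convergent perturbation: here $\{(g_i + e_{m_n+i}, z_0)\}_n \subset K$ still converges weakly to $(g_i, z_0)$ in $X$, because the weak topology on $c_0 \oplus_\infty Z$ is the product of the weak topologies of the factors, so $(e_{m_n+i}, 0)$ is weakly null in $X$. The rest of the argument produces a sequence in $W \cap B_\delta$ whose first coordinates attain $1$ at arbitrarily large indices, and Lemma \ref{charradio} then yields $r(W \cap B_\delta) \geq 1$.

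For the slice bound, consider the functional $\phi \in X^*$ given by $\phi(y, z) := z_0^*(z)$. Since $\phi((g, z_0)) = 1$ for every $g \in K_0$ and $|\phi|$ is bounded by $1$ on each of the three generating sets of $B_\delta$, one has $|\phi|_{\delta^*} = 1$, and the slice $S_t := \{x \in B_\delta : \phi(x) > 1 - t\}$ plays the role of the $S_t$ at the end of the proof of Lemma \ref{renormacion}. With $(a, z_0), (b, z_0)$ in place of $(a, 1), (b, 1)$ and $z_0^*(z_0) = 1$ in place of $\pi_2(a, 1) = 1$, the same chain of inequalities delivers
\[
\diam_\delta(S_t) \leq \frac{1}{1-\eps} + \frac{3t}{\eps},
\]
which drops below $1 + \delta$ once $t$ is sufficiently small. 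The ``in particular'' clause is then Sobczyk's theorem, which ensures that $c_0$ is complemented in every separable Banach space that contains it.

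The real obstacle is conceptual rather than computational: one must notice that the argument of Lemma \ref{renormacion} never used the one-dimensionality of $\R$, only the presence of a distinguished unit vector together with a norming functional on it. Once this is observed, the substitution $1 \mapsto z_0$, $\pi_2 \mapsto z_0^* \circ \pi_Z$ (with $\pi_Z : X \to Z$ the projection) transports the entire proof to the general case, and no essentially new ingredients are required.
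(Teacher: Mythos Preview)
Your argument is correct, but it follows a genuinely different route from the paper's.

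The paper does \emph{not} re-open the construction of Lemma~\ref{renormacion}. Instead it splits off an extra one-dimensional summand, writing $X \simeq (c_0 \oplus_\infty \mathbb{R}) \oplus_\infty Z$, and then applies Lemma~\ref{renormacion} as a black box to the first factor $Y := c_0 \oplus_\infty \mathbb{R}$. The complement $Z$ is separately renormed (via \cite[Lemma~2.1]{blr16}) so that $B_Z$ has slices of arbitrarily small diameter. The r-BWOP for $X = Y \oplus_\infty Z$ then follows from Proposition~\ref{infsuma}, and the slice bound is obtained by intersecting a small-diameter slice of $B_Y$ with a small-diameter slice of $B_Z$ inside the $\ell_\infty$-sum. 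So the paper's proof is modular: it combines Lemma~\ref{renormacion}, Proposition~\ref{infsuma}, and an external renorming result, rather than revisiting the Argyros--Odell--Rosenthal construction.

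Your approach, by contrast, absorbs the complement $Z$ into the construction itself via the substitution $1 \mapsto z_0$, $\pi_2 \mapsto z_0^* \circ \pi_Z$. This is more self-contained: it dispenses with Proposition~\ref{infsuma} and with the auxiliary renorming of $Z$ from \cite{blr16}. The price is that one must recheck each step of Lemma~\ref{renormacion} in the new setting---which, as you correctly note, goes through because the proof only uses a distinguished unit vector and a norming functional on the second factor, together with the fact that the diameter estimate $|(x,\alpha)-w_n|_\delta \geq 1$ depends only on the $c_0$-coordinate and is therefore insensitive to $\alpha \in Z$. Both proofs invoke Sobczyk's theorem for the final clause.
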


\begin{proof}
By assumption there exists a complemented subspace of $X$ which is isomorphic to $c_0$, so it is isomorphic to $c_0 \bigoplus_\infty \R$. Thus we can assume without loss of generality that 
$$X \simeq \left( c_0 \bigoplus\nolimits_\infty \R\right) \bigoplus\nolimits_\infty Z$$
for certain closed subspace $Z \subset X$. Now, if we consider an equivalent renorming on $Z$ such that the unit ball contains slices of arbitrarily small diameter (c.f. e.g. \cite[Lemma 2.1]{blr16}) and the equivalent norm on $c_0 \bigoplus_\infty \R$ described in Lemma~\ref{renormacion} (taking $\varepsilon=1-\frac{1}{1+\delta}$, so $1+\delta=\frac{1}{1-\varepsilon}$), we can renorm $X$ in such a way that $X = Y \bigoplus_\infty Z$ where the unit ball of $Z$ contains slices of arbitrarily small diameter and $Y$ has the r-BWOP and satisfies that the infimum of the diameter of slices of $B_Y$ is less than or equal to $1 + \delta$.

Proposition~\ref{infsuma} applies to $X$ and allows us to conclude that it has the r-BWOP, so it remains to check the second part of the theorem.

To this end observe that, given $\eps > 0$, by the condition on $Y$ we can find $f \in S_{Y^*}$ and $\alpha >0$ such that the slice
$$S_1 = \{y \in B_Y : f(y) > 1 - \alpha\}$$
satisfies
\begin{align}\label{1renorma}
\diam(S_1) < 1 + \delta + \eps.
\end{align}
On the other hand, since $B_Z$ contains slices of arbitrarily small diameter we can find $g \in S_{Z^*}$ and $\beta >0$ such that the slice
$$S_2 = \{z \in B_Z : g(z) > 1 - \beta\}$$
satisfies
\begin{align}\label{2renorma}
\diam(S_2) < 1+\delta.
\end{align}
Putting all together we can define the following (non-empty) slice of $B_X$:
$$S = \{(y,z) \in B_X: f(y) + g(z) > 2-2\gamma\},$$
where $0 < 2\gamma < \min\{\alpha, \beta\}$, and let us prove that
\begin{align}\label{3renorma}
S \subset S_1 \times S_2,
\end{align}
because in that case, using (\ref{1renorma}) together with (\ref{2renorma}), it is clear that
$$\diam(S) \leq 1+ \delta + \eps,$$
which will yield the desired consequence by the arbitrariness of $\eps>0$.

In order to check (\ref{3renorma}) observe that, given $(y, z) \in S$, then $y \in B_Y$, $z \in B_Z$ and $f(y) + g(z) > 2 - 2\gamma$. From here we get
$$f(y) > 2 -2\gamma - g(z)\geq \ 2 - 2\gamma - 1 = 1 - 2 \gamma > 1 - \alpha.$$
The fact that $g(z) > 1 - \beta$ is similar, which means $y \in S_1$ and $z \in S_2$, as desired.
\end{proof}

Now we are ready to prove Theorem~\ref{maintheorem}.

\begin{proof}[Proof of Theorem~\ref{maintheorem}]

Select any Banach space $Z$ containing a complemented copy of $c_0$. By Theorem~\ref{c0compl} we find, for every $n\in\mathbb N$, a Banach space $X_n$ isomorphic to $Z$ such that $X_n$ has the r-BWOP and
$$\inf\{\diam(S): S \text{ is a slice of } B_{X_n}\} \leq 1 + \dfrac{1}{n}.$$
Define $X= \ell_1(\N, X_n)$ so, by Proposition~\ref{psuma}, we get that $X$ has the r-BWOP. To finish, let us prove that given $0 <\eps <1$ there exists a slice $S_\eps$ of $B_X$ whose diameter is less than or equal to $1+\eps$.

In order to do so, take $n > 4/\eps$ and select $S_n$ to be a slice of $B_{X_n}$, defined by $f_n \in S_{X_n^*}$ and $\alpha > 0$, such that $\diam(S_n) \leq 1+ 2/n$. Take $0 < \delta < \min\{\alpha, \eps/4\}$ and define the (non-empty) slice
$$S_n' = \{x \in B_{X_n}: f_n(x) > 1- \delta\},$$
which clearly satisfies $S_n' \subset S_n$ and thus $\diam(S_n') \leq 1+2/n$. Finally define
$$S_\eps = \{x \in B_X: (f_n \circ \pi_n)(x) > 1-  \delta\},$$
and let us prove that $S_\varepsilon$ is the desired slice. Indeed, given $x=(x_m), y=(y_m) \in S_\eps$, it is clear that $x_n, y_n \in S_n'$, so
\begin{align}\label{remate1}
\norm{x_n - y_n} \leq 1+\dfrac{2}{n} < 1+\dfrac{\eps}{2}.
\end{align}
Moreover, taking into account that $\norm{x_n} \geq f_n(x_n) > 1-\delta$ we obtain
\begin{align}\label{remate2}
\sum_{\substack{m=1 \\ m \ne n}}^\infty \norm{x_m} = \norm{x}_1 - \norm{x_n} \leq 1 - (1-\delta) < \dfrac{\eps}{4}.
\end{align}
Similarly, we get
\begin{align}\label{remate3}
\sum_{\substack{m=1 \\ m \ne n}}^\infty \norm{y_m} < \dfrac{\eps}{4}.
\end{align}
Combining (\ref{remate1}), (\ref{remate2}) and (\ref{remate3}) we get
\begin{align*}
\norm{x-y}_1 & = \sum_{m=1}^\infty \norm{x_m - y_m}  = \norm{x_n-y_n} + \sum_{\substack{m=1 \\ m \ne n}}^\infty \norm{x_m - y_m} \\
& \leq \norm{x_n - y_n} + \sum_{\substack{m=1 \\ m \ne n}}^\infty \norm{x_m} + \sum_{\substack{m=1 \\ m \ne n}}^\infty \norm{y_m} < 1 + \eps
\end{align*}
which proves that $\diam(S_\varepsilon)\leq 1+\varepsilon$ by the arbitrariness of $x,y\in S_\varepsilon$.
\end{proof}

Several remarks are pertinent.

\begin{remark}\label{remark:conseqmaintheor}
\begin{enumerate}
\item An inspection in the proof of Theorem~\ref{maintheorem} reveals that, indeed, given any Banach space $X$ containing a complemented copy of $c_0$ (in particular, any separable Banach space $X$ containing $c_0$), there exists a sequence of Banach spaces $X_n$ isomorphic to $X$ such that $\ell_1(\N, X_n)$ satisfies the conclusion of the theorem. This reveals a wide class of non-isomorphic Banach spaces satisfying the conclusion of the theorem. This should be compared with the examples constructed in \cite[Theorem 3.1]{rz23} of Banach spaces with the r-BSP and such that the infimum of the diameter of slices is $1$, which are based on very specific tree spaces. 

\item At this point it is natural to wonder whether the above mentioned spaces constructed in \cite[Theorem 3.1]{rz23} could have been used to distinguish the r-BWOP and the D2P. In fact, the answer is no, because all the spaces constructed there satisfy the CPCP, in particular, the unit ball of all these spaces contains non-empty relatively weakly open subsets with arbitrarily small radius.

\item As we have pointed out before, in \cite{rz23} Banach spaces with the r-BSP and the CPCP are constructed. A natural question, whose answer we do not know, is whether there are Banach spaces with the r-BWOP and being strongly regular.
\end{enumerate}
\end{remark}

\section*{Acknowledgements}

This research has been supported  by MCIU/AEI/FEDER/UE\\  Grant PID2021-122126NB-C31, by MICINN (Spain) Grant \\ CEX2020-001105-M (MCIU, AEI) and by Junta de Andaluc\'{\i}a Grant FQM-0185. The research of E. Mart\'inez Va\~n\'o has also been supported by Grant PRE2022-101438 funded by MCIN/AEI/10.13039/501100011033 and ESF+. The research of A. Rueda Zoca was also supported by Fundaci\'on S\'eneca: ACyT Regi\'on de Murcia grant 21955/PI/22.


\begin{thebibliography}{999999}

\bibitem {ahntt16} T. A.~Abrahamsen, P.~H\'ajek, O.~Nygaard, J.~Talponen and S.~Troyanski, \textit{Diameter 2 properties and convexity}, Stud. Math. \textbf{232}, 3 (2016), 227--242.

\bibitem {almt21} T. A. Abrahansen, V. Lima, A. Martiny and S. Troyanski, \textit{Daugavet- and delta-points in Banach spaces with unconditional bases}, Trans. Amer. Math. Soc. Ser. B \textbf{8} (2021), 379--398.

\bibitem {aln13} T. A. Abrahansen, V. Lima and O. Nygaard, \textit{Remarks on diameter two properties}, J. Convex Anal. \textbf{20}, 2 (2013), 439--452.

\bibitem{aor} S.\ Argyros, E.\ Odell,  and H.\ Rosenthal, \textit{On certain convex subsets of $c_0$}, Lecture Notes in Math. 1332, Functional Analysis, ed. by E.\ Odell and H.\ Rosenthal, Berlin (1988), 80--111.

\bibitem {bl06} J. Becerra Guerrero and G. L\'opez-P\'erez, \textit{Relatively weakly open subsets of the unit ball of functions spaces}, J. Math. Anal. Appl. \textbf{315}, 2 (2006) 544--554.

\bibitem {blr15eje1} J. Becerra Guerrero, G. L\'opez-P\'erez and A. Rueda Zoca, \textit{Big slices versus big relatively weakly open subsets in Banach spaces}, J. Math. Anal. Appl. \textbf{428} (2015), 855--865.

\bibitem{blr15eje2} J. Becerra Guerrero, G. L\'opez-P\'erez and A. Rueda Zoca, \textit{Extreme differences between weakly open subsets and convex combination of slices in Banach spaces}, Adv. Math. \textbf{269} (2015), 56--70.


\bibitem {blr16} J. Becerra Guerrero, G. L\'opez-P\'erez and A. Rueda Zoca, \textit{Subspaces of Banach spaces with big slices}, Banach J. Math. Anal. \textbf{10}, 4  (2016), 771--782.


\bibitem {deville88} R. Deville, {\it A dual characterization of the existence of small combinations of slices}, Bull. Aust. Math. Soc. {\bf 37}, 1 (1988), 113--120.


\bibitem {hllnr20} R. Haller, J. Langemets, V. Lima, R. Nadel and A. Rueda Zoca, \textit{On Daugavet indices of thickness}, J. Funct. Anal. \textbf{280} (2021), article 108846.

\bibitem {ivakhno06} Y.~Ivakhno, \textit{On sets with extremely big slices}, J. Math. Phys. Anal. Geom. \textbf{2}, 1 (2006), 94–-103.

\bibitem {ivakhno04} Y. Ivakhno and V. Kadets, \textit{Unconditional sums of spaces with bad projections}, Visn. Khark. Univ. Ser. Mat. Prykl. Mat. Mekh. 645 (2004), 30--35.

\bibitem {jozi78} K. John, V. Zizler, \textit{On rough norms on Banach spaces}, Comment. Math. Univ. Carolinae. 19 (1978), 335--349.

\bibitem {kssw01} V. Kadets, R. V. Shvidkoy, G. G. Sirotkin, and D. Werner. \textit{Banach spaces with the Daugavet property}, Trans. Am. Math. Soc. \textbf{352}, 2 (2000), 855--873.

\bibitem{lmr} G. López-Pérez, M. Martín, A. Rueda Zoca. \textit{Strong diameter two property and convex combinations of slices reaching the unit sphere}. Mediterr. J. Math., 2019, \textbf{16}, 5, 122.

\bibitem {nw01} O. Nygaard, D. Werner, \textit{ Slices in the unit ball of a uniform algebra}, Arch. Math. \textbf{76}, 6 (2001), 441--444.

\bibitem {rz18} A. Rueda Zoca, \textit{Daugavet property and separability in Banach spaces}, Banach J. Math. Anal. \textbf{12} (1) (2018), 68--84.

\bibitem {rz23} A. Rueda Zoca, \textit{Diameter, radius and Daugavet index of thickness of slices in Banach spaces}, to appear in Israel J. Math. Available at ArXiV.org with reference \href{https://arxiv.org/abs/2306.01467}{arXiv:2306.01467}






\end{thebibliography}
\end{document}